\begin{document}
\newenvironment {proof}{{\noindent\bf Proof.}}{\hfill $\Box$ \medskip}

\newtheorem{theorem}{Theorem}[section]
\newtheorem{lemma}[theorem]{Lemma}
\newtheorem{condition}[theorem]{Condition}
\newtheorem{proposition}[theorem]{Proposition}
\newtheorem{remark}[theorem]{Remark}
\newtheorem{definition}[theorem]{Definition}
\newtheorem{hypothesis}[theorem]{Hypothesis}
\newtheorem{corollary}[theorem]{Corollary}
\newtheorem{example}[theorem]{Example}
\newtheorem{descript}[theorem]{Description}
\newtheorem{assumption}[theorem]{Assumption}

\newcommand{\ba}{\begin{align}}
\newcommand{\ea}{\end{align}}

\def\P{\mathbb{P}}
\def\R{\mathbb{R}}
\def\E{\mathbb{E}}
\def\N{\mathbb{N}}
\def\Z{\mathbb{Z}}

\renewcommand {\theequation}{\arabic{section}.\arabic{equation}}
\def \non{{\nonumber}}
\def \hat{\widehat}
\def \tilde{\widetilde}
\def \bar{\overline}

\def\ind{{\mathchoice {\rm 1\mskip-4mu l} {\rm 1\mskip-4mu l}
{\rm 1\mskip-4.5mu l} {\rm 1\mskip-5mu l}}}

\title{\Large\ {\bf Unbiased estimation of second-order parameter sensitivities for stochastic reaction networks }}

\author{Ankit Gupta and Mustafa Khammash\\
}
\date{\today}
\maketitle
\begin{abstract}
This paper deals with the problem of estimating second-order parameter sensitivities for stochastic reaction networks, where the reaction dynamics is modeled as a continuous time Markov chain over a discrete state space. Estimation of such second-order sensitivities (the Hessian) is necessary for implementing the Newton-Raphson scheme for optimization over the parameter space. To perform this estimation, Wolf and Anderson \cite{Beth} have proposed an efficient finite-difference method, that uses a coupling of perturbed processes to reduce the estimator variance. The aim of this paper is to illustrate that the same coupling can be exploited to derive an exact representation for second-order parameter sensitivity. Furthermore with this representation one can construct an unbiased estimator which is easy to implement. The ideas contained in this paper are extensions of the ideas presented in \cite{Gupta,Gupta2} in the context of first-order parameter sensitivity estimation.
\end{abstract}

\medskip
\setcounter{equation}{0}

\section{Problem Definition}

Consider a reaction network with $d$ species and $K$ reactions whose \emph{stoichiometric} vectors are given by $\zeta_1,\dots,\zeta_K$. We assume that propensities of various reactions depend on a vector of $p$ parameters $\theta = (\theta_1,\dots,\theta_p)$, which may represent systems's parameters such as, reaction rate constants, hill coefficients, cell volume etc. When the state is $x$, the $k$-th reaction fires at rate $\lambda_k(x,\theta)$ and it moves the state to $(x+\zeta_k)$. In the stochastic setting, the reaction dynamics can be represented by a Markov process whose generator is
\begin{align*}
\mathbb{A}_{\theta} f(x) = \sum_{k=1}^K \lambda_k(x,\theta) \Delta_{\zeta_k} f(x),
\end{align*}
where $\Delta_{\zeta_k} f(x) = f(x+\zeta_k)-f(x)$.

Let $(X_\theta(t))_{t \geq 0}$ be a process with generator $\mathbb{A}_{\theta}$. Then its random time change representation (see \cite{EK}) is given by
\begin{align*}
X_\theta(t) = X_\theta(0) + \sum_{k=1}^K Y_k\left( \int_{0}^{t} \lambda_k( X_\theta(s),\theta  )ds \right) \zeta_k,
\end{align*}
where $\{ Y_k :  k =1,\dots,K\}$ is a family of independent unit rate Poisson processes. For any function $f : \N^d_0 \to \R$ that expresses an output of interest, define
\begin{align*}
\Psi_\theta(x,f,t) = \E\left( f( X_\theta(t) )  \vert X_\theta(0) =x  \right).
\end{align*}
Then for any $i=1,\dots,p$, the \emph{first-order} sensitivity
\begin{align}
\label{def:firstsens}
S^{(i)}_{\theta} (x, f, t) = \frac{  \partial \Psi_\theta(x,f,t)  }{  \partial \theta_i  }
\end{align}
measures how sensitive the expected value of the output at time $t$, $\E( f( X_\theta(t) )$, is to small changes in parameter $\theta_i$. 
Many methods exist in the literature to estimate $S^{(i)}_{\theta} (x, f, t) $ (see \cite{KSR1,KSR2,DA,Gupta,Gupta2,Gir}). In this paper we deal with the problem of estimating the \emph{second-order} sensitivity
\begin{align*}
S^{(i,j)}_{\theta} (x, f, t) = \frac{  \partial^2 \Psi_\theta(x,f,t)  }{  \partial \theta_i \partial \theta_j  },
\end{align*}
for any $i,j \in \{1,\dots,p\}$. Such a quantity measures the local curvature of the mapping $\theta \mapsto \E ( f(X_\theta(t)) )$ and this information is useful in implementing optimization schemes such as the Newton Raphson method.

In \cite{Beth}, the authors estimate $S^{(i,j)}_{\theta} (x, f, t) $ using a finite-difference approximation of the form
\begin{align}
\label{daz_fd}
S^{(i,j)}_{\theta} (x, f, t) \approx \E\left( \frac{ f( X_{\theta_1^\epsilon}(t) ) - f( X_{\theta_2^\epsilon}(t) ) - f( X_{\theta_3^\epsilon}(t) ) + f( X_{\theta_4^\epsilon}(t) )   }{\epsilon^2}  \right),
\end{align}
for a small $\epsilon$, where each $X_{\theta_l^\epsilon}$ is a process with generator $\mathbb{A}_{\theta_l^\epsilon} $ and initial state $x$, and $\theta_l^\epsilon$-s denote perturbations of the parameter $\theta$ defined as follows:\footnote{Here $e_i$ denotes the vector $(0,\dots,0,1,0,\dots) \in \R^q$ where the $1$ is at the $i$-th location.}
\begin{align*}
\theta_1^\epsilon = \theta +(e_i+e_j) \epsilon, \quad  \theta_2^\epsilon = \theta +e_i \epsilon, \quad \theta_3^\epsilon = \theta +e_j \epsilon \quad \textnormal{and} \quad \theta_1^\epsilon = \theta.
\end{align*}
Moreover the processes $X_{\theta_1^\epsilon},X_{\theta_2^\epsilon},X_{\theta_3^\epsilon}$ and $X_{\theta_4^\epsilon}$, are intelligently coupled to lower the variance of the associated estimator. The main drawback of finite-difference approximation is that it introduces a \emph{bias} in the estimate and generally the size or even the sign of this bias is unknown, which can cause problems in certain applications.

Interestingly, the coupling described in \cite{Beth} can be used to derive an exact formula for $S^{(i,j)}_{\theta} (x, f, t)$ by extending the ideas presented in \cite{Gupta} in the context of first-order sensitivity. The advantage of such a formula is that it allows one to construct an efficient unbiased estimator for $S^{(i,j)}_{\theta} (x, f, t)$, in the same way as the formula for the first-order sensitivity $S^{(i)}_{\theta} (x, f, t)$ in \cite{Gupta} was used in \cite{Gupta2} to devise an unbiased estimator for this quantity. In Section \ref{mainresult} we present our main result that expresses $S^{(i,j)}_{\theta} (x, f, t)$ as the expectation of a certain random variable. In Section \ref{algo} we describe how this result can be used for obtaining unbiased estimates of $S^{(i,j)}_{\theta} (x, f, t)$ in an efficient way.

\section{Main Result} \label{mainresult}
From now on, let $\lambda_0(x,\theta)$ be the function denoting the sum of propensities
\begin{align*}
\lambda_0(x,\theta) = \sum_{k=1}^K \lambda_k(x,\theta).
\end{align*}

\begin{theorem}
\label{th:mainresult}
Suppose $(X_\theta(t))_{t \geq 0}$ is the Markov process with generator $\mathbb{A}_\theta$ and initial state $x_0$. Let $\sigma_l$ be the $l$-th jump time of the process for $l=0,1,\dots$. Then for any function $f : \N^d_0 \to \R$ and $t \geq 0$, $S^{(i,j)}_{\theta} (x_0, f, t)  = \E\left( s^{(i,j)}_\theta (x_0,f,t) \right)$ where
\begin{align}
\label{defnsij}
& s^{(i,j)}_\theta (x_0,f,t)\\ &= \sum_{k=1}^K \left[ \int_{0}^t \frac{ \partial^2 \lambda_k  ( X_\theta(s) ,\theta ) }{ \partial \theta_i \partial \theta_j }  \Delta_{\zeta_k} f ( X_\theta(s) ) ds  \right.  \notag\\ 
& \left.   +  \sum_{l=0, \sigma_l<t}^\infty \frac{ \partial \lambda_k( X_\theta(\sigma_l) ,\theta  ) }{ \partial \theta_i }  \left( \int_{0}^{t - \sigma_l} 
(S^{(j)}_{\theta} (X_\theta(\sigma_l) + \zeta_k, f, t - \sigma_l -s)  - S^{(j)}_{\theta} (X_\theta(\sigma_l), f, t - \sigma_l - s) ) e^{-\lambda_0( X_\theta(\sigma_l) ,\theta  ) s   } ds \right) \right. \notag \\ & \left.   +  \sum_{l=0, \sigma_l<t}^\infty \frac{ \partial \lambda_k( X_\theta(\sigma_l) ,\theta  ) }{ \partial \theta_j }  \left( \int_{0}^{t - \sigma_l} 
(S^{(i)}_{\theta} (X_\theta(\sigma_l) + \zeta_k, f, t -\sigma_l -s)  - S^{(i)}_{\theta} (X_\theta(\sigma_l), f, t-\sigma_l -s) ) e^{-\lambda_0( X_\theta(\sigma_l) ,\theta  ) s   } ds \right) \right. \notag \\ & \left.   +  \sum_{l=0, \sigma_l<t}^\infty\frac{ \partial^2 \lambda_k  ( X_\theta(\sigma_l) ,\theta ) }{ \partial \theta_i \partial \theta_j }    \left( \int_{0}^{t - \sigma_l} 
( \Psi_\theta (X_\theta(\sigma_l) + \zeta_k, f, t - \sigma_l -s)  - \Psi_\theta(X_\theta(\sigma_l), f, t - \sigma_l-s)  \right. \right. \notag \\ &  \left. \left. \qquad \qquad  \qquad \qquad  \qquad  \qquad -\Delta_{\zeta_k} f(X_\theta(\sigma_l) )  ) e^{-\lambda_0( X_\theta(\sigma_l) ,\theta  ) s } ds \right) \right], \notag
\end{align}
\end{theorem}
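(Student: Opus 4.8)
The plan is to read off the representation from the backward Kolmogorov equation for $\Psi_\theta$, differentiate it twice in the parameter, apply Duhamel's principle (variation of constants) for the semigroup generated by $\mathbb{A}_\theta$, and then reorganize the resulting path integral into the stated sum over jump times.

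First I would record that $u(x,\tau):=\Psi_\theta(x,f,\tau)$ solves $\partial_\tau u=\mathbb{A}_\theta u$ with $u(\cdot,0)=f$. Writing $A^{(i)}$ and $A^{(ij)}$ for the operators obtained from $\mathbb{A}_\theta$ by replacing each $\lambda_k$ with $\partial\lambda_k/\partial\theta_i$ and $\partial^2\lambda_k/\partial\theta_i\partial\theta_j$, a product-rule computation shows that $v^{(ij)}:=\partial^2 u/\partial\theta_i\partial\theta_j = S^{(i,j)}_\theta(\cdot,f,\cdot)$ solves the inhomogeneous equation $\partial_\tau v^{(ij)}=\mathbb{A}_\theta v^{(ij)}+g$ with zero initial data, where the terms in which no propensity is differentiated assemble the homogeneous part $\mathbb{A}_\theta v^{(ij)}$ and the remaining terms form the source
\begin{align*}
g(y,\tau) &= \sum_k \frac{\partial^2\lambda_k(y,\theta)}{\partial\theta_i\partial\theta_j}\Delta_{\zeta_k}\Psi_\theta(y,f,\tau) + \sum_k \frac{\partial\lambda_k(y,\theta)}{\partial\theta_i}\Delta_{\zeta_k}S^{(j)}_\theta(y,f,\tau)\\
&\quad + \sum_k \frac{\partial\lambda_k(y,\theta)}{\partial\theta_j}\Delta_{\zeta_k}S^{(i)}_\theta(y,f,\tau).
\end{align*}
Duhamel's principle then gives the closed expectation
\begin{align*}
S^{(i,j)}_\theta(x_0,f,t) = \E\left(\int_0^t g(X_\theta(r),t-r)\,dr \,\Big|\, X_\theta(0)=x_0\right).
\end{align*}

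The second step converts this into the jump-time form. The engine is the identity that for any $H$ with $\E\int_0^t|H(X_\theta(r),t-r)|\,dr<\infty$ one has
\begin{align*}
\E\left(\int_0^t H(X_\theta(r),t-r)\,dr\right) = \E\left(\sum_{l:\,\sigma_l<t}\int_0^{t-\sigma_l}H(X_\theta(\sigma_l),t-\sigma_l-s)\,e^{-\lambda_0(X_\theta(\sigma_l),\theta)s}\,ds\right).
\end{align*}
I would prove this by splitting the integral over the inter-jump intervals $[\sigma_l,\sigma_{l+1}\wedge t)$, on which $X_\theta$ is constant and equal to $X_\theta(\sigma_l)$, and conditioning on $\mathcal{F}_{\sigma_l}$: given this information $H(X_\theta(\sigma_l),\cdot)$ is determined while the next holding time $\sigma_{l+1}-\sigma_l$ is exponential with rate $\lambda_0(X_\theta(\sigma_l),\theta)$, so an elementary Fubini computation over the holding time replaces the random upper limit $\sigma_{l+1}\wedge t-\sigma_l$ by the deterministic $t-\sigma_l$ at the cost of the survival factor $e^{-\lambda_0(X_\theta(\sigma_l),\theta)s}$. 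Applying this to the second and third pieces of $g$ produces the second and third terms of \eqref{defnsij} verbatim; here the identity applies directly because $S^{(j)}_\theta(\cdot,f,0)=0$ (as $\Psi_\theta(\cdot,f,0)=f$ is parameter-free), so $\Delta_{\zeta_k}S^{(j)}_\theta$ vanishes at time $0$. For the first piece I would split $\Delta_{\zeta_k}\Psi_\theta(y,f,\tau)=\Delta_{\zeta_k}f(y)+\bigl(\Delta_{\zeta_k}\Psi_\theta(y,f,\tau)-\Delta_{\zeta_k}f(y)\bigr)$: the first summand contributes, without reorganization, the leading integral term of \eqref{defnsij}, and the \emph{centered} remainder, which vanishes at $\tau=0$, is converted by the same identity into the fourth term. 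Collecting the four contributions yields $S^{(i,j)}_\theta(x_0,f,t)=\E\bigl(s^{(i,j)}_\theta(x_0,f,t)\bigr)$.

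The main obstacle is analytic rather than algebraic: justifying that $\Psi_\theta$ is twice continuously differentiable in $\theta$ and that the two parameter derivatives may be interchanged with $\partial_\tau$, with the Duhamel integral, and with the expectation. This calls for regularity and growth hypotheses on $\theta\mapsto\lambda_k(\cdot,\theta)$ together with non-explosivity of $X_\theta$ and integrability of $f$ and its first- and second-order sensitivities, so that the Feynman--Kac representation is licensed and the series over jump times converges absolutely (non-explosivity ensures finitely many jumps in $[0,t]$ almost surely). Granting these, the remainder of the argument is bookkeeping, with the holding-time identity doing the real work of passing from the continuous-time Duhamel integral to the jump-indexed representation.
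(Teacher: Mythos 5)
Your proposal is correct in substance but takes a genuinely different route from the paper. The paper's own proof is only a sketch of a probabilistic argument: it starts from the finite-difference approximation \eqref{daz_fd}, with the four perturbed processes $X_{\theta_1^\epsilon},\dots,X_{\theta_4^\epsilon}$ coupled as in Wolf and Anderson \cite{Beth}, applies Dynkin's formula to the coupled system, and passes to the limit $\epsilon \to 0$, extending the first-order argument of \cite{Gupta}; the details are explicitly deferred elsewhere. You instead differentiate the backward Kolmogorov equation twice in $\theta$, identify the source term $g = A^{(ij)}\Psi_\theta + A^{(i)}S^{(j)}_\theta + A^{(j)}S^{(i)}_\theta$, represent $S^{(i,j)}_\theta$ by Duhamel as $\E\int_0^t g(X_\theta(r),t-r)\,dr$, and then convert the path integral to the jump-indexed form via the holding-time identity with survival factor $e^{-\lambda_0(X_\theta(\sigma_l),\theta)s}$; this correctly reproduces all four terms of \eqref{defnsij}, including the splitting of the $\partial^2\lambda_k$ contribution into the plain time integral of $\Delta_{\zeta_k}f$ plus the centered fourth term. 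The trade-off is clear: your route is shorter and makes the provenance of each term in \eqref{defnsij} transparent, but it presupposes exactly what is analytically hardest --- that $\Psi_\theta$ is twice continuously differentiable in $\theta$ and that the parameter derivatives commute with $\partial_\tau$, the Duhamel integral, and the expectation --- a gap you flag honestly but do not close, whereas the coupling/finite-difference route is designed to \emph{establish} this differentiability as part of the conclusion (and simultaneously yields the low-variance coupled estimator that motivates the whole construction). Two minor remarks: your aside that the holding-time identity ``applies directly because $S^{(j)}_\theta(\cdot,f,0)=0$'' is unnecessary, since the identity is exact for any integrable $H$ with no vanishing at $\tau=0$ required --- the centering of the $\Psi_\theta$ term matters for the decay of the summands as $\sigma_l \to t$ (hence for the estimator's behavior), not for the validity of the representation; and since the paper's proof is itself only a sketch, your argument, once the regularity hypotheses are supplied, would constitute a more complete derivation than the one printed here, though along different lines than the authors intend.
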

\begin{proof}
The proof follows by a simple extension of the ideas presented in \cite{Gupta}. We start with the finite-difference approximation of the form \eqref{daz_fd}, where the processes $X_{\theta_1^\epsilon},X_{\theta_2^\epsilon},X_{\theta_3^\epsilon}$ and $X_{\theta_4^\epsilon}$ are coupled in the same way as described in \cite{Beth}. Using Dynkin's formula and exploiting the coupling, we can pass to the limit $\epsilon \to 0$ and prove the relation given above. The details of the proof shall be provided elsewhere. 
\end{proof}

Observe that Theorem \ref{th:mainresult} expresses the second-order sensitivity as the expectation of a random variable which only involves first-order sensitivities and expectations of the underlying Markov process. Since many efficient methods exist to estimate such first-order sensitivities and expectations, one can hope to use Theorem \ref{th:mainresult} to construct an efficient estimator for the second-order sensitivity. However the main difficulty is that one has to estimate a ``new" quantity (first-order sensitivity and/or expectation) at each jump time in the observation time period $[0,t]$. This can be very cumbersome as the number of jumps can be very high. However using the ideas in \cite{Gupta2} we can get around this problem and only estimate these ``new" quantities at a \emph{small} number of jump times, and still achieve an unbiased estimate for the second-order sensitivity. We describe this approach in the next section.

\section{Algorithm} \label{algo}

We use the same notation as in Theorem \ref{th:mainresult}. Let $(X_\theta(t))_{t \geq 0}$ be the Markov process with generator $\mathbb{A}_\theta$, initial state $x_0$, and let $\sigma_l$ be the $l$-th jump time of the process for $l=0,1,\dots$. The total number of jumps until time $t$ is given by the random variable
\begin{align}
\label{defn_eta}
\eta_t = \max \{ i \geq 0 :  \sigma_i < t \}.
\end{align}
For simplicity we assume that there are no absorbing states (that is, $\lambda_0(x,\theta) >0$ for all $x \in \N^d_0$). For each $l=0,\dots,\eta_l$ let $\gamma_l$ be an independent exponentially distributed random variable with rate $\lambda_0( X_\theta( \sigma_l ),\theta )$ and define
\begin{align}
\label{defn_Gammal}
\Gamma_l =   \left\{
\begin{array}{cc}
1 & \textnormal{ if } \gamma_l < (t - \sigma_l)  \\
0 & \textnormal{ otherwise}
\end{array}
\right\}. 
\end{align}
For each $k\in \{1,\dots,K\}$ and $q \in \{ i,j\}$ let $ \beta^{(q)}_{kl} $ be given by
\begin{align*}
 \beta^{(q)}_{kl} = \textnormal{Sign}\left(  \frac{ \partial \lambda_k( X_\theta( \sigma_l) ,\theta ) }{  \partial \theta_q }   \right) \  \textnormal{ where } \ \textnormal{Sign}(x)= \left\{
\begin{array}{cc}
1   &   \textnormal{ if }  x > 0  \\
-1   &  \textnormal{ if }   x < 0  \\
0 &    \textnormal{ if } x= 0
\end{array}
\right\}.
\end{align*}
Similarly let
\begin{align*}
 \beta^{(i,j)}_{kl} = \textnormal{Sign}\left(  \frac{ \partial^2 \lambda_k( X_\theta( \sigma_l) ,\theta ) }{  \partial \theta_i \theta_j }   \right) .
\end{align*}

Now we choose a normalizing constant $c > 0$, which specifies the ``density" of jump times at which we estimate a new quantity of the form $ S^{(i)}_{\theta}(\cdot),  S^{(j)}_{\theta}(\cdot)$ or $\Psi_\theta(\cdot)$. More details on the role of $c$ and how it can be chosen can be found in \cite{Gupta2}. Please note that the estimator we construct will remain unbiased for any choice of $c$, but its variance may vary. For each $q \in \{ i,j\}$, if $ \beta^{(q)}_{kl}  \neq 0$ and $\Gamma_l = 1$, then let $\rho^{(q)}_{kl}$ be an independent $\N_0$-valued random variable whose distribution is Poisson with parameter
\begin{align}
\label{rateofpoisoon}
\frac{c}{ \lambda_0( X_\theta(\sigma_l) ,\theta ) }  \left|  \frac{ \partial \lambda_k (X_\theta(\sigma_l) ,\theta ) }{ \partial \theta_q }  \right|.
\end{align}
Similarly if $ \beta^{(i,j)}_{kl}  \neq 0$ and $\Gamma_l = 1$, then let $\rho^{(i,j)}_{kl}$ be an independent $\N_0$-valued random variable whose distribution is Poisson with parameter
\begin{align}
\label{rateofpoisoon}
\frac{c}{ \lambda_0( X_\theta(\sigma_l) ,\theta ) }  \left|  \frac{ \partial^2 \lambda_k (X_\theta(\sigma_l) ,\theta ) }{ \partial \theta_i \partial \theta_j }  \right|.
\end{align}
Let
\begin{align*}
\Delta t_l  = \left\{
\begin{array}{cc}
 (\sigma_{l+1} - \sigma_l) & \textrm{ for } l = 0,\dots,\eta_t-1  \\
 (T - \sigma_\eta) & \textrm{ for } l = \eta_t  \\
\end{array}\right\}
\end{align*}
and define
\begin{align}
\label{expr:sthetahat}
\hat{s}^{(i,j)}_\theta (x_0,f,t)  &= \sum_{k=1}^K \sum_{l = 0}^{\eta_t  }  \left[ 
\frac{ \partial^2 \lambda_k  ( X_\theta(\sigma_l) ,\theta ) }{ \partial \theta_i \partial \theta_j }  \Delta_{\zeta_k}f( X_\theta(\sigma_l)  )\left( \Delta t_l - \frac{ \Gamma_l }{ \lambda_0( X_\theta(\sigma_l) ,\theta  ) } \right) \right. \\ & \left.
+
\frac{1}{c}  \Gamma_l  \left(     \beta^{(i)}_{kl} \rho^{(i)}_{kl} \hat{S}^{(i)}_{kl} +  \beta^{(j)}_{kl} \rho^{(j)}_{kl} \hat{S}^{(j)}_{kl}   +   \beta^{(i,j)}_{kl} \rho^{(i,j)}_{kl} \hat{D}_{kl} \right)    \right],  \notag
\end{align}
where the construction of random variables $\hat{S}^{(i)}_{kl}$,  $\hat{S}^{(j)}_{kl}$ and $\hat{D}_{kl}$ is described below.

Let $(Z_1(t))_{t \geq 0}$ and $(Z_2(t))_{t \geq 0}$ be two processes with random time change representations given by:
\begin{align*}
Z_1(t) &=( X_\theta(\sigma_l)+ \zeta_k) + \sum_{k = 1}^K \hat{Y}_k\left( \int_{0}^{t}  \lambda_k( Z_1(s) ,\theta) \wedge \lambda_k( Z_2(s) ,\theta) ds \right)\zeta_k \\
&+  \sum_{k = 1}^K \hat{Y}^{(1)}_k\left( \int_{0}^{t} \left(\lambda_k( Z_1(s) ,\theta) -  \lambda_k( Z_1(s) ,\theta) \wedge \lambda_k( Z_2(s) ,\theta) \right) ds \right) \zeta_k \\  \textnormal{ and } \  
Z_2(t) &=X_\theta(\sigma_l)+ \sum_{k = 1}^K \hat{Y}_k\left( \int_{0}^{t}  \lambda_k( Z_1(s) ,\theta) \wedge \lambda_k( Z_2(s) ,\theta) ds \right)\zeta_k \\
&+  \sum_{k = 1}^K \hat{Y}^{(2)}_k\left( \int_{0}^{t} \left(\lambda_k( Z_2(s) ,\theta) -  \lambda_k( Z_1(s) ,\theta) \wedge \lambda_k( Z_2(s) ,\theta) \right) ds \right) \zeta_k,
\end{align*}
where $\{\hat{Y}_k, \hat{Y}^{(1)}_k,\hat{Y}^{(2)}_k : k =1,\dots,K\}$ is an independent family of unit rate Poisson processes. Let
\begin{align}
\label{defn_dkl}
\hat{D}_{kl} = f( Z_1( t - \sigma_l - \gamma_l ) ) - f( Z_2( t - \sigma_l - \gamma_l ) ),
\end{align}
then we must have
\begin{align}
\label{unbpr1}
\E\left( \hat{D}_{kl} \vert  \mathcal{F}_t\right) = \Psi_{\theta} (X_\theta(\sigma_l) + \zeta_k, f, t - \sigma_l - \gamma_l)  - \Psi_{\theta} (X_\theta(\sigma_l), f, t - \sigma_l - \gamma_l),
\end{align}
where $\{ \mathcal{F}_s\}_{s \geq 0}$ is the filtration generated by the process $(X_\theta(t))_{t \geq 0}$. 

We now construct $\hat{S}^{(q)}_{kl}$ for each $q \in \{i,j\}$. Using the \emph{Poisson Path Algorithm}(PPA) given in \cite{Gupta2}, with the underlying Markov process as $Z_1$, we can generate one realization of a random variable $\hat{s}^{(q)}_1$ whose expectation is
\begin{align*}
\E\left(  \hat{s}^{(q)}_1 \right) = S^{(q)}_{\theta} (X_\theta(\sigma_l) + \zeta_k, f, t - \sigma_l - \gamma_l). 
\end{align*}
Similarly using PPA with the underlying Markov process as $Z_2$, we can generate one realization of another random variable $\hat{s}^{(q)}_2$ whose expectation is
\begin{align*}
\E\left(  \hat{s}^{(q)}_2 \right) = S^{(q)}_{\theta} (X_\theta(\sigma_l), f, t - \sigma_l - \gamma_l). 
\end{align*}
Defining
\begin{align}
\label{defn_sqkl}
\hat{S}^{(q)}_{kl} =  \hat{s}^{(q)}_1 - \hat{s}^{(q)}_2,
\end{align}
we must have that
\begin{align}
\label{unbpr2}
\E\left( \hat{S}^{(q)}_{kl} \vert  \mathcal{F}_t\right) = S^{(q)}_{\theta} (X_\theta(\sigma_l) + \zeta_k, f, t - \sigma_l - \gamma_l)  - S^{(q)}_{\theta} (X_\theta(\sigma_l), f, t - \sigma_l - \gamma_l).
\end{align}

Using relations \eqref{unbpr1} and \eqref{unbpr2} one can show using a simple conditioning argument that
\begin{align*}
\E\left( s^{(i,j)}_\theta (x_0,f,t ) \right) = \E\left( \hat{s}^{(i,j)}_\theta (x_0,f,t)  \right),
\end{align*}
where $ s^{(i,j)}_\theta (x_0,f,t ) $ and $\hat{s}^{(i,j)}_\theta (x_0,f,t) $ are defined by \eqref{defnsij} and \eqref{expr:sthetahat} respectively. Hence Theorem \ref{th:mainresult} guarantees that by generating realizations of the random variable $\hat{s}^{(i,j)}_\theta (x_0,f,t)$, we can obtain an unbiased estimate for second-order parameter  sensitivity $S^{(i,j)}_{\theta} (x_0, f, t) $.

 A single realization of the random variable $\hat{s}^{(i,j)}_\theta (x_0,f,t)$ (given by \eqref{expr:sthetahat}) can be computed using \\$\Call{GenerateSample}{x_0,t,c}$ (Algorithm \ref{gensensvalue}). This method simulates the process $X_{\theta}$ according to SSA and at each state $x$ and jump time $s=\sigma_l$, the following happens:
\begin{itemize}
\item The exponential random variable $\gamma$ (where $\gamma = \gamma_l$ in \eqref{expr:sthetahat}) is generated and the corresponding $\Gamma_l$ (see \eqref{defn_Gammal}) is calculated. 
\item If $\gamma < (t-s)$ then for each $k=1,\dots,K$ such that either $\partial \lambda_k(x,\theta)/ \partial \theta_i , \partial \lambda_k(x,\theta)/ \partial \theta_j $ or $\partial^2 \lambda_k(x,\theta)/ \partial \theta_i \partial \theta_j $ is non-zero, we generate the appropriate Poisson random variable ($\rho^{(i)}_{kl},\rho^{(j)}_{kl}$ or $\rho^{(i,j)}_{kl}$), and if this random variable is positive then the appropriate quantity ($\hat{S}^{(i)}_{kl},\hat{S}^{(j)}_{kl}$ or $\hat{D}_{kl}$) is calculated and the sample value is updated according to \eqref{expr:sthetahat}.
\end{itemize}

In this algorithm, we  assume that the function \emph{rand()} returns independent samples from the uniform distribution on $[0,1]$. Moreover $n  \sim \Call{Poisson }  { r}$ implies that $n$ is an independently generated random variable having Poisson distribution with parameter $r$.  When the state of the process is $x$, the next time increment ($\Delta s$) and reaction index ($k$), as prescribed by Gillespie's SSA \cite{GP}, can be calculated using function $\Call{SSA}{x}$ (see Algorithm \ref{SSA}).

\begin{algorithm}[h]                     
\caption{Computes the next time increment ($\Delta s$) and reaction index $(k)$ for Gillespie's SSA}
\label{SSA}  
\begin{algorithmic}[1]
\Function{SSA}{$x$} 
\State Set $r_1 = \mathrm{rand}()$ , $r_2 = \mathrm{rand}()$ and $k = 0$

	\State Calculate $\Delta s= -\log(r_1)/\lambda_0(x,\theta)$
	\State Set $S = 0$
	\While {$S < r_2$}
		\State Update $k \gets k + 1$
		\State Update $S \gets S + \lambda_k(x,\theta)/ \lambda_0(x,\theta) $
	\EndWhile

\State \Return $(\Delta s, k)$
\EndFunction
\end{algorithmic}
\end{algorithm}

\begin{algorithm}[h]
\caption{Generates one realization of $\hat{s}^{(i,j)}_\theta (x_0,f,t) $ according to \eqref{expr:sthetahat} }
\label{gensensvalue}     
\begin{algorithmic}[1]
\Function{GenerateSample}{$x_0,t,c$}
    \State Set $x = x_0$, $s = 0$ and $S = 0$
\While {$ s <  t $} 
\State  Calculate $(\Delta s, k_0 ) =$ SSA$(x)$
\State Update $\Delta s  \gets \min\{\Delta s, t -s\}$ and set $\gamma = -\frac{ \log(rand())}{ \lambda_0(x,\theta) }$
\If{$\gamma \geq (t-s)$ }
\State Update $S \gets S + \left( \frac{ \partial \lambda^2_k(x,\theta) }{ \partial \theta_i \partial \theta_j } \right) (f(x+\zeta_k) -f(x) ) \Delta s$
\Else
\State Update $S \gets S + \left( \frac{ \partial \lambda^2_k(x,\theta) }{ \partial \theta_i \partial \theta_j } \right)  (f(x+\zeta_k) -f(x) ) \left( \Delta s - \frac{ 1 }{ \lambda_0(x,\theta) }  \right)$
\For {$k = 1$ to $K$}	
\State Set $r^{(i)} = \left|  \frac{ \partial \lambda_k(x,\theta) }{ \partial \theta_i }  \right|$,  $r^{(j)} = \left|  \frac{ \partial \lambda_k(x,\theta) }{ \partial \theta_j }  \right|$  and $r^{(i,j)} = \left|  \frac{ \partial^2 \lambda_k(x,\theta) }{ \partial \theta_i \partial \theta_j }  \right|$
\State Set $\beta^{(i)} =\textnormal{Sign}\left( \frac{ \partial \lambda_k(x,\theta) }{ \partial \theta_i }  \right)$,  $\beta^{(j)} = \textnormal{Sign}\left(  \frac{ \partial \lambda_k(x,\theta) }{ \partial \theta_j }  \right)$  and $\beta^{(i,j)} = \textnormal{Sign}\left(  \frac{ \partial^2 \lambda_k(x,\theta) }{ \partial \theta_i \partial \theta_j }  \right)$
\If {$r^{(i)} > 0$}
\State Set $n \sim \Call{Poisson } { \frac {r^{(i)} c }{ \lambda_0(x,\theta) }  }$
\If {$n > 0$}
\State Calculate $\hat{S}^{(i)}_{kl}$ according to \eqref{defn_sqkl} with $q=i, \sigma_l =s, \gamma_l = \gamma$ and $X_\theta(\sigma_l)=x$.
\State Update $S \gets S +\left( \frac{\beta^{(i)} n}{c}  \right) \hat{S}^{(i)}_{kl}$
\EndIf
\EndIf

\If {$r^{(j)} > 0$}
\State Set $n  \sim \Call{Poisson }  { \frac {r^{(j)} c }{ \lambda_0(x,\theta) }  }$
\If {$n > 0$}
\State Calculate $\hat{S}^{(j)}_{kl}$ according to \eqref{defn_sqkl} with $q=j,\sigma_l =s, \gamma_l = \gamma$ and $X_\theta(\sigma_l)=x$.
\State Update $S \gets S +\left( \frac{\beta^{(j)} n}{c}  \right) \hat{S}^{(j)}_{kl}$
\EndIf
\EndIf

\If {$r^{(i,j)} > 0$}
\State Set $n  \sim \Call{Poisson }  { \frac {r^{(i,j)} c }{ \lambda_0(x,\theta) }  }$
\If {$n > 0$}
\State Calculate $\hat{D}_{kl}$ according to \eqref{defn_dkl} with $\sigma_l =s, \gamma_l = \gamma$ and $X_\theta(\sigma_l)=x$.
\State Update $S \gets S +\left( \frac{\beta^{(i,j)} n}{c}  \right) \hat{D}_{kl}$
\EndIf
\EndIf

\EndFor

\EndIf

\State Update $s \gets s +\Delta s$ and $x \gets x+\zeta_{k_0}$
\EndWhile 
\State \Return $S$
\EndFunction
\end{algorithmic}
\end{algorithm}


\begin{thebibliography}{1}

\bibitem{DA}
D.~Anderson.
\newblock An efficient finite difference method for parameter sensitivities of
  continuous time markov chains.
\newblock {\em SIAM: Journal on Numerical Analysis}, 2012.

\bibitem{EK}
S.~N. Ethier and T.~G. Kurtz.
\newblock {\em Markov processes}.
\newblock Wiley Series in Probability and Mathematical Statistics: Probability
  and Mathematical Statistics. John Wiley \& Sons Inc., New York, 1986.
\newblock Characterization and convergence.

\bibitem{GP}
D.~T. Gillespie.
\newblock Exact stochastic simulation of coupled chemical reactions.
\newblock {\em The Journal of Physical Chemistry}, 81(25):2340--2361, 1977.

\bibitem{Gupta}
A.~Gupta and M.~Khammash.
\newblock Unbiased estimation of parameter sensitivities for stochastic
  chemical reaction networks.
\newblock {\em SIAM : Journal on Scientific Computing}, 35(6):2598 -- 2620,
  2013.

\bibitem{Gupta2}
A.~Gupta and M.~Khammash.
\newblock An efficient and unbiased method for sensitivity analysis of
  stochastic reaction networks.
\newblock {\em Unpublished. Available on arXiv: 1402.3076}, 2014.

\bibitem{Gir}
S.~Plyasunov and A.~Arkin.
\newblock Efficient stochastic sensitivity analysis of discrete event systems.
\newblock {\em Journal of Computational Physics}, 221:724--738, 2007.

\bibitem{KSR1}
M.~Rathinam, P.~W. Sheppard, and M.~Khammash.
\newblock Efficient computation of parameter sensitivities of discrete
  stochastic chemical reaction networks.
\newblock {\em Journal of Chemical Physics}, 132, 2010.

\bibitem{KSR2}
P.~W. Sheppard, M.~Rathinam, and M.~Khammash.
\newblock A pathwise derivative approach to the computation of parameter
  sensitivities in discrete stochastic chemical systems.
\newblock {\em Journal of Chemical Physics}, 136, 2012.

\bibitem{Beth}
E.~S. Wolf and D.~F. Anderson.
\newblock A finite difference method for estimating second order parameter
  sensitivities of discrete stochastic chemical reaction networks.
\newblock {\em The Journal of Chemical Physics}, 137(22), 2012.

\end{thebibliography}

\end{document}